\numberwithin{equation}{section}
\newtheorem{thm}[equation]{Theorem}
\newtheorem*{thmnonum}{Theorem}
\newtheorem{prop}[equation]{Proposition}
\newtheorem{lemma}[equation]{Lemma}
\newtheorem{cor}[equation]{Corollary}
\theoremstyle{definition}
\newtheorem{eg}[equation]{Example}
\newcommand{\C}{{\mathbb{C}}}
\newcommand{\R}{{\mathbb{R}}}
\newcommand{\Z}{{\mathbb{Z}}}
\newcommand{\N}{{\mathbb{N}}}
\newcommand{\A}{\mathbb{A}}
\newcommand{\D}{\mathbb{D}}
\newcommand{\abs}[1]{\left\lvert#1\right\rvert}
\newcommand{\norm}[1]{\left\lVert {#1} \right\rVert}
\newcommand{\ip}[2]{\left\langle #1, #2 \right\rangle}
\newcommand{\B}{\mathcal{B}}
\newcommand{\BP}{\mathcal{B}_{\Omega}}
\newcommand{\barf}{\bar{f}}
\author[R. Sivaguru]{Sivaguru Ravisankar}
\address{School of Mathematics, Tata Institute of Fundamental Research,
Mumbai 400005, India}
\email{sivaguru@math.tifr.res.in}
\author{Yunus E. Zeytuncu}
\address{Department of Mathematics and Statistics, University of Michigan-Dearborn,
Dearborn, Michigan 48128, USA}
\email{zeytuncu@umich.edu}
\subjclass[2010]{Primary 32A25, Secondary 32A36}
\keywords{Bergman projection, Friedrichs operator, Reinhardt domain, Smoothing property}
\thanks{The work of the second author is partially supported by a grant from the Simons Foundation (\#353525).}
\date{August 25, 2016}
\title[Smoothing properties of the Bergman projection]{A note on smoothing properties of the Bergman projection}
\begin{document}

\maketitle

\begin{abstract}
Recently Herbig, McNeal, and Straube have showed that the Bergman projection of conjugate holomorphic functions is smooth up to the boundary on smoothly bounded domains that satisfy condition R.
We show that a further smoothing property holds on a family of Reinhardt domains; namely, the Bergman projection of conjugate holomorphic functions is holomorphic past the boundary.
\end{abstract}

%%%%%%%%%%%%%%%%%%%%%%%%%%%%%%%%%%%%%%%%%%%%%%%%%%%%%%%%%%%%%%%%%%%%%%%%%%%%

\section{Introduction}\label{sec:Intro}

Let $\Omega$ be a bounded domain in $\mathbb{C}^n$. The Bergman projection $\BP$ is the orthogonal projection operator from the space of square integrable functions $L^2(\Omega)$ onto the closed subspace of square integrable holomorphic functions $A^2(\Omega)$. The boundedness of $\BP$ on other function spaces is of considerable interest and it has deep connections with the geometry of the domain. For example, if $\Omega$ is strongly pseudoconvex then the projection operator is bounded on all $L^p$-Sobolev spaces $W^{p,k}(\Omega)$ for all $p\in(1,\infty)$ and $k\geq 0$ \cite{PhoSte}.
On the other hand, if the operator $\BP$ is bounded on all $L^2$-Sobolev spaces $W^{k}(\Omega)$ for $k\geq 0$ then any biholomoprhic map from a smooth $\Omega$ to another smoothly bounded pseudoconvex domain extends smoothly to the closure of domains \cite{Bell}.

The Bergman projection preserves holomorphic functions in $L^2(\Omega)$ and hence the holomorphic $L^2$-Sobolev space at each scale. Therefore, one can not expect any gain in the Sobolev scale under $\BP$.
However, in two recent papers \cite{HerMcN} and \cite{HerMcNStr} the authors have shown that $\BP$ gains derivatives in certain directions on smoothly bounded domains that satisfy condition R.
A domain $\Omega$ satisfies condition R if $\BP$ maps the space of functions smooth up to the boundary $C^{\infty}(\overline{\Omega})$ to itself.
Out of a few different formulations of this gain, we highlight the following version.

\begin{thmnonum}\cite[Corollary 1.12]{HerMcNStr}\label{hms}
Let $\Omega$ be a smoothly bounded domain in $\mathbb{C}^n$ that satisfies condition R. Then for any $f\in A^2(\Omega)$, $\BP\barf$ is smooth up to the boundary.
\end{thmnonum}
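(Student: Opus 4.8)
The plan is to prove the quantitative smoothing estimate
\[
\norm{\BP\bar\phi}_{C^{k}(\overline{\Omega})}\le C_{k}\,\norm{\phi}_{L^{2}(\Omega)}
\qquad\text{for all }\phi\in A^{2}(\Omega)\cap C^{\infty}(\overline{\Omega})\text{ and all }k,
\]
and then to deduce the theorem by density. Condition R makes $A^{2}(\Omega)\cap C^{\infty}(\overline{\Omega})$ dense in $A^{2}(\Omega)$: if $u_{j}\in C^{\infty}(\overline{\Omega})$ and $u_{j}\to f$ in $L^{2}$, then $\BP u_{j}\to f$ in $A^{2}$ while $\BP u_{j}\in C^{\infty}(\overline{\Omega})$. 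So for $f\in A^{2}(\Omega)$ I would approximate $f$ by such $\phi_{j}$; the displayed estimate applied to $\phi_{i}-\phi_{j}$ makes $\{\BP\bar\phi_{j}\}$ Cauchy in every $C^{k}(\overline{\Omega})$, and since it converges to $\BP\bar f$ in $L^{2}$ one gets $\BP\bar f\in C^{\infty}(\overline{\Omega})$. Restricting to smooth $\phi$ has the advantage that $\overline{\partial\phi}$ is then a genuine smooth $\bar\partial$-closed $(0,1)$-form, so that the decomposition $\BP\bar\phi=\bar\phi-\bar\partial^{*}N_{1}\overline{\partial\phi}$ is available together with the global regularity that condition R provides (for $\BP$, and, via Kohn's theory, for the canonical solution operator $\bar\partial^{*}N_{1}$).

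The first step I would carry out is a localization: fix $\chi\in C_{c}^{\infty}(\Omega)$ equal to $1$ on a large interior subdomain and split $\bar\phi=\chi\bar\phi+(1-\chi)\bar\phi$. The compactly supported part $\BP(\chi\bar\phi)$ is soft. The closed graph theorem turns condition R into estimates $\norm{\BP u}_{C^{k}(\overline{\Omega})}\le C_{k}\norm{u}_{C^{m_{k}}(\overline{\Omega})}$; in particular $K_\Omega(\cdot,w)=\BP(\psi_{w})\in C^{\infty}(\overline{\Omega})$, where $\psi_{w}$ is a fixed radial bump of unit mass centered at $w$ (which reproduces holomorphic functions at $w$ by the mean value property), with $C^{k}(\overline{\Omega})$–norm bounded uniformly for $w$ in compact subsets. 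Since $\bar\phi$ is antiholomorphic it too enjoys the mean value property, so for $w$ ranging over $\operatorname{supp}\chi$ one may write $\overline{\phi(w)}=\int_{\Omega}\Psi_{w}(\zeta)\overline{\phi(\zeta)}\,dV(\zeta)$ with $\Psi_{w}$ a fixed–scale mollifier; inserting this and interchanging the order of integration gives
\[
\BP(\chi\bar\phi)(z)=\int_{\Omega}\overline{\phi(\zeta)}\;\BP(\Theta_{\zeta})(z)\,dV(\zeta),\qquad \Theta_{\zeta}(w)=\chi(w)\Psi_{w}(\zeta)\in C_{c}^{\infty}(\Omega),
\]
where the $C^{m}(\overline{\Omega})$–norms of $\Theta_{\zeta}$ are bounded independently of $\zeta$. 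Differentiating under the integral sign and using the $\BP$–estimate on $\Theta_{\zeta}$ then yields $\BP(\chi\bar\phi)\in C^{\infty}(\overline{\Omega})$ with $\norm{\BP(\chi\bar\phi)}_{C^{k}(\overline{\Omega})}\lesssim_{k}\norm{\phi}_{L^{1}(\Omega)}\lesssim\norm{\phi}_{L^{2}(\Omega)}$.

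The remaining term $\BP\bigl((1-\chi)\bar\phi\bigr)$, supported in a collar of $\partial\Omega$, is where the real work lies. Writing $(1-\chi)\bar\phi=\overline{(1-\chi)\phi}$ and using the $\bar\partial$-Neumann decomposition, the task becomes to control the canonical solution of a $\bar\partial$ problem whose datum is assembled from $\overline{\partial\phi}$; here one exploits that, its coefficients being antiholomorphic, $\overline{\partial\phi}$ is simultaneously $\bar\partial$- and $\partial$-closed and (formally) annihilated by $\bar\partial^{*}$, so that all of the potential loss is concentrated in the boundary behavior of $N_{1}$, which condition R controls. The delicate point is to make quantitative the cancellation in $\bar\phi-\bar\partial^{*}N_{1}\overline{\partial\phi}$ so that only $\norm{\phi}_{L^{2}(\Omega)}$ survives on the right; I would do this by repeated integration by parts against a defining function $\rho$ of $\Omega$, trading each normal or boundary derivative for a factor of $\rho$ that vanishes on $\partial\Omega$ — this is precisely the derivative–gain mechanism of Herbig–McNeal \cite{HerMcN}. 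I expect this collar estimate to absorb essentially all the difficulty; the interior piece is routine once the kernel regularity above is in hand, and combining the two proves the estimate, hence the theorem.
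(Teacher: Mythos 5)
First, a point of order: the paper does not prove this statement --- it is quoted from \cite[Corollary 1.12]{HerMcNStr} --- so there is no internal proof to compare yours against; your proposal has to stand on its own. Parts of it do. The reduction to the a priori estimate $\norm{\BP\bar\phi}_{C^{k}(\overline{\Omega})}\le C_{k}\norm{\phi}_{L^{2}(\Omega)}$ via the density of $A^{2}(\Omega)\cap C^{\infty}(\overline{\Omega})$ in $A^{2}(\Omega)$ (which condition R does supply, exactly by the argument you give) is sound, and your treatment of the interior term $\BP(\chi\bar\phi)$ is a correct, self-contained argument in the spirit of Kerzman's theorem on interior smoothness of $B_{\Omega}(\cdot,w)$.

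The gap is the collar term, which you yourself flag as carrying ``essentially all the difficulty'' and for which you supply only a plan. Two concrete problems. (1) The tools you invoke are not available under the stated hypotheses: the theorem assumes only that $\Omega$ is smoothly bounded and satisfies condition R --- no pseudoconvexity. On such a domain $\bar\partial$ need not have closed range, so there need not be a $\bar\partial$-Neumann operator $N_{1}$, a Kohn formula $\BP u=u-\bar\partial^{*}N_{1}\bar\partial u$, or Sobolev estimates for $\bar\partial^{*}N_{1}$; condition R does not, by itself, ``via Kohn's theory'' control $\bar\partial^{*}N_{1}$. The derivative-gain mechanism of \cite{HerMcN} is carried out under hypotheses formally stronger than condition R (pseudoconvexity together with global regularity of the $\bar\partial$-Neumann operator). (2) Even granting those hypotheses, ``repeated integration by parts against a defining function'' so that ``only $\norm{\phi}_{L^{2}}$ survives'' is precisely the theorem to be proved, and it occupies the whole of \cite{HerMcN}; it cannot be cited as a mechanism and left unexecuted. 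The actual proof of Corollary 1.12 in \cite{HerMcNStr} takes a different route that avoids the $\bar\partial$-Neumann problem entirely: under condition R one establishes the duality of $A^{\infty}(\Omega)$ and $A^{-\infty}(\Omega)$ under the pairing $(f,g)\mapsto\int_{\Omega}fg\,dV$, observes that $\ip{\BP\barf}{g}=\overline{\int_{\Omega}fg\,dV}$ for holomorphic $g$, and concludes that $\BP\barf$ represents a functional continuous in arbitrarily weak holomorphic topologies, hence lies in $A^{\infty}(\Omega)$. To complete your argument you should either reproduce that duality argument or restrict the statement to pseudoconvex domains with a globally regular $\bar\partial$-Neumann operator and carry out the \cite{HerMcN} scheme in full.
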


We note that even when the function $f$ does not belong to any Sobolev space $W^k(\Omega)$ for $k>0$, the projection $\BP\barf$ automatically lands in all $W^k(\Omega)$. In other words, on such domains the Bergman projection $\BP$ \textit{smoothens} conjugate holomorphic functions.

Moreover, the following observation on complete Reinhardt domains suggests a further smoothing may hold.
On such a domain $\Omega$, $\BP \bar{f}$ is a constant for $f\in A^2(\Omega)$.
This follows from $\{z^{\alpha}/c_\alpha\,:\,\alpha\in (\N\cup\{0\})^n\}$ forming an orthonormal basis for $A^2(\Omega)$.
Consequently, not only is $\BP\barf$ smooth up to the boundary, it is, in fact, holomorphic on a larger domain (actually holomorphic on $\mathbb{C}^n$). This example suggests that $\BP$ may have a further smoothing property and we indeed prove such a result on a family of Reinhardt domains.

\begin{thm}\label{thm:Main}
Let $\Omega\subset\mathbb{C}^n$ be a bounded Reinhardt domain with $C^1$ boundary such that $\overline{\Omega}$ does not intersect the coordinate axes. Then, for every $f\in A^2(\Omega)$, $\B_\Omega\bar{f}$ extends holomorphically to a (strictly) larger domain.
\end{thm}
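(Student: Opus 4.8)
The plan is to diagonalize the problem with monomials, reducing it to a convergence question for an explicit Laurent series. Because $\overline{\Omega}$ is compact and disjoint from the coordinate axes, every monomial $z^{\alpha}$ with $\alpha\in\Z^{n}$ — negative exponents included — lies in $L^{2}(\Omega)$; rotational invariance of $\Omega$ makes these monomials pairwise orthogonal, and their span is dense, so $\{z^{\alpha}:\alpha\in\Z^{n}\}$ is an orthogonal basis of $A^{2}(\Omega)$. Writing $f=\sum_{\alpha}c_{\alpha}z^{\alpha}$ one has $\abs{c_{\alpha}}\le\norm{f}/\norm{z^{\alpha}}$, and since $\int_{\Omega}z^{\gamma}\,dV$ vanishes unless $\gamma=0$ (when it is $\mathrm{Vol}(\Omega)$), one gets $\ip{\barf}{z^{\beta}}=\mathrm{Vol}(\Omega)\,\overline{c_{-\beta}}$, hence
\[
\BP\barf=\mathrm{Vol}(\Omega)\sum_{\beta\in\Z^{n}}\frac{\overline{c_{-\beta}}}{\norm{z^{\beta}}^{2}}\,z^{\beta}.
\]
The decisive feature is that negative exponents $\beta$ occur here; this is what forces holomorphy across $\partial\Omega$ rather than mere boundary regularity. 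Everything now reduces to showing that this Laurent series converges on a domain strictly containing $\Omega$.

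The technical heart is the growth of the monomial norms. In logarithmic coordinates $x=(\log\abs{z_{1}},\dots,\log\abs{z_{n}})$ one has $\norm{z^{\alpha}}^{2}=\int e^{2\langle\alpha,x\rangle}\,d\mu(x)$, where $\mu$ is a finite positive measure supported on the compact set $\overline{\log\Omega}$; write $\hat K$ for the closed convex hull of $\log\Omega$ and $h(\xi)=\sup_{x\in\hat K}\langle\xi,x\rangle$ for its support function. The upper bound $\norm{z^{\alpha}}^{2}\le\mu(\R^{n})\,e^{2h(\alpha)}$ is immediate. The step I expect to be the main obstacle is the matching lower bound: for every $\epsilon>0$ there is $c_{\epsilon}>0$ with $\norm{z^{\alpha}}^{2}\ge c_{\epsilon}\,e^{2h(\alpha)-\epsilon\abs{\alpha}}$ for all $\alpha\in\Z^{n}$. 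This is a Laplace-type (large-deviations) estimate: restricting the integral to the nonempty open set $\{x\in\log\Omega:\langle\alpha/\abs{\alpha},x\rangle>h(\alpha/\abs{\alpha})-\epsilon/2\}$ yields the bound with $c_{\epsilon}=\inf_{\abs{\omega}=1}\mu\bigl(\{x\in\log\Omega:\langle\omega,x\rangle>h(\omega)-\epsilon/2\}\bigr)$, and the point requiring care is that this infimum is strictly positive — which follows from a short compactness argument on the unit sphere using the continuity of $h$ and the openness of $\log\Omega$.

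Finally I would feed these bounds into the series for $\BP\barf$. With $y=\log\abs{z}$, absolute convergence of the Laurent series of $\BP\barf$ at $z$ amounts (after the substitution $\gamma=-\beta$ and discarding the harmless constant $\mathrm{Vol}(\Omega)$) to finiteness of $\sum_{\gamma}\abs{c_{\gamma}}\norm{z^{-\gamma}}^{-2}e^{-\langle\gamma,y\rangle}$, which by $\abs{c_{\gamma}}\le\norm{f}/\norm{z^{\gamma}}$ is bounded by
\[
\norm{f}\sum_{\gamma\in\Z^{n}}\frac{e^{-\langle\gamma,y\rangle}}{\norm{z^{\gamma}}\,\norm{z^{-\gamma}}^{2}}.
\]
The upper/lower norm bounds estimate the general term by $C_{\epsilon}\exp\bigl(-\langle\gamma,y\rangle-h(\gamma)-2h(-\gamma)+\tfrac{3}{2}\epsilon\abs{\gamma}\bigr)$. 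Since $\gamma\mapsto-\langle\gamma,y\rangle-h(\gamma)-2h(-\gamma)$ is positively homogeneous of degree one, the sum over $\Z^{n}$ converges — locally uniformly in $y$, for $\epsilon$ small — precisely when $\langle\omega,y\rangle<2h(\omega)+h(-\omega)$ for every unit vector $\omega$; and $\xi\mapsto 2h(\xi)+h(-\xi)$ is the support function of $2\hat K-\hat K:=\{\,2a-b:a,b\in\hat K\,\}$, so this says exactly $y\in\mathrm{int}(2\hat K-\hat K)$. Hence $\BP\barf$ extends holomorphically to the Reinhardt domain $\Omega'=\{\,z\in\C^{n}:z_{1}\cdots z_{n}\neq 0,\ (\log\abs{z_{1}},\dots,\log\abs{z_{n}})\in\mathrm{int}(2\hat K-\hat K)\,\}$. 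To finish one checks $\Omega\subsetneq\Omega'$: a midpoint argument shows every point of $\hat K$ — in particular every point of $\overline{\log\Omega}$ — lies in $\mathrm{int}(2\hat K-\hat K)$, so $\overline{\Omega}\subset\Omega'$; and $2\hat K-\hat K\supsetneq\hat K$ because $\hat K$ is a convex body that is not a single point. This is the strict enlargement claimed in the theorem.
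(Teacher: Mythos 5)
Your argument is correct, but it takes a genuinely different route from the paper's. The paper works local-to-global: it first treats a single product of annuli $P$ by exhibiting a slightly larger product $P'$ with $\norm{z^\alpha}_{L^2(P')}\le C\,\norm{z^{-\alpha}}_{L^2(P)}\norm{z^\alpha}_{L^2(P)}^2$ (a one-variable computation on each annulus factor), then passes from $\B_P\barf$ to $\B_\Omega\barf$ via the trivial inequality $\norm{z^\alpha}_{L^2(P)}\le\norm{z^\alpha}_{L^2(\Omega)}$, and finally patches finitely many such local extensions over the boundary --- this is exactly where the $C^1$ hypothesis enters, to guarantee a coordinate direction transversal to $b\widehat{\Omega}$ at each boundary point so that a suitable $P$ can be inscribed with $P'$ reaching past that point. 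You instead argue globally on the logarithmic image: your two-sided Laplace estimate $c_\epsilon e^{2h(\alpha)-\epsilon\abs{\alpha}}\le\norm{z^\alpha}^2\le Ce^{2h(\alpha)}$ identifies the domain of normal convergence of the Laurent series of $\BP\barf$ as the Reinhardt domain over $\mathrm{int}(2\hat K-\hat K)$, and the support-function inequality $h(\omega)<2h(\omega)+h(-\omega)$ gives $\overline{\Omega}\subset\Omega'$. What your approach buys: an explicit and automatically log-convex enlarged domain in one step (the paper obtains log-convexity only by passing to the hull at the end), and --- worth noting --- your proof nowhere uses the $C^1$ boundary assumption, so it actually establishes the theorem for an arbitrary bounded Reinhardt domain whose closure misses the coordinate axes. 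What the paper's approach buys: a genuinely local statement (Theorem \ref{thm:MainLocalFull}) that still yields extension across $bP\cap b\Omega$ on domains like the Hartogs triangle where the global conclusion fails, and entirely elementary annulus computations in place of the uniform lower bound $c_\epsilon>0$, which is the one step of your argument that truly requires the lower-semicontinuity-plus-compactness care you correctly flag (and which your sketch handles adequately).
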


In contrast to the result of Herbig, McNeal, and Straube, the above theorem describes a new phenomenon on a different class of domains.
Even on smooth Reinhardt domains which satisfy condition R (see \cite{Boas84, Straube86}), our result draws a stronger conclusion.
Note that on any smoothly bounded pseudoconvex domain there exists a holomorphic function that is smooth up to the boundary and does not extend past any boundary point \cite{Catlin80}. 
Furthermore, it is not known whether condition R holds on the domains we consider without smoothness of the boundary.

The Bergman projection acting on conjugate holomorphic functions is also known as the Friedrichs operator.
The study of this operator has a long history, see \cite{Friedrichs,KrantzEtAl}, with deep connections to the study of quadrature domains in complex analysis.
In a subsequent article, we plan to explore the implications of our results for the Friedrichs operator and quadrature domains.

Our main tool in proving Theorem \ref{thm:Main} is observing it in the special case of a product of annuli in $\mathbb{C}^n$. We show this in Proposition \ref{prop:ExtnProdAnn} and this can be rephrased as the following local smoothing property. Once we prove the local statement, the global version follows from a patching argument.

\begin{thm}\label{thm:MainLocal}
Let $\Omega\subset\mathbb{C}^n$ be a bounded Reinhardt domain and $P\subset\Omega $ be a product of annuli such that $\overline{P}$ does not intersect the coordinate axes. Then, for every $f\in A^2(\Omega)$, $\B_\Omega\bar{f}$ extends holomorphically past $bP\cap b\Omega$.
\end{thm}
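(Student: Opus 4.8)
The plan is to use the Reinhardt symmetry to write $\B_\Omega\barf$ as an explicit Laurent monomial series and then read off its domain of convergence. Since $\Omega$ is a bounded Reinhardt domain, the normalized monomials $\{z^\alpha/\norm{z^\alpha}_{L^2(\Omega)} : \alpha\in S\}$, where $S:=\{\alpha\in\Z^n : z^\alpha\in L^2(\Omega)\}$, form an orthonormal basis of $A^2(\Omega)$ (standard, via uniqueness of Laurent expansions on Reinhardt domains). Writing $f=\sum_{\alpha\in S}a_\alpha z^\alpha$, the only monomial occurring in the orthonormal expansion of $\barf$ that pairs nontrivially with $z^\alpha$ is the one built from $z^{-\alpha}$: since $\int_\Omega z^\gamma\,dV=0$ for $\gamma\neq 0$ (rotate one coordinate) while $\int_\Omega dV=V(\Omega)$, one gets $\ip{\barf}{z^\alpha}_{L^2(\Omega)}=\overline{\int_\Omega f z^\alpha\,dV}=\overline{a_{-\alpha}}\,V(\Omega)$, with the convention $a_{-\alpha}:=0$ when $-\alpha\notin S$. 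Hence
\[
\B_\Omega\barf \;=\; \sum_\alpha c_\alpha z^\alpha, \qquad c_\alpha \;=\; \frac{V(\Omega)\,\overline{a_{-\alpha}}}{\norm{z^\alpha}_{L^2(\Omega)}^2},
\]
the sum running over those $\alpha$ with $z^{\pm\alpha}\in A^2(\Omega)$ (and $c_\alpha:=0$ otherwise), and this series converges normally on $\Omega$ to $\B_\Omega\barf$ (an $L^2(\Omega)$-convergent series of holomorphic functions converges locally uniformly, by the sub-mean-value inequality).

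Next I would pass from $\Omega$ to $P=\prod_{j=1}^n A_j$, $A_j=\{\rho_j<\abs{z_j}<R_j\}$, $0<\rho_j<R_j$ (the hypothesis that $\overline P$ misses the axes is exactly $\rho_j>0$). From $\norm{f}_{L^2(\Omega)}^2=\sum_\alpha\abs{a_\alpha}^2\norm{z^\alpha}_{L^2(\Omega)}^2$ we get $\abs{a_{-\alpha}}\le\norm{f}_{L^2(\Omega)}/\norm{z^{-\alpha}}_{L^2(\Omega)}$, and since $P\subseteq\Omega$ we have $\norm{z^\beta}_{L^2(P)}\le\norm{z^\beta}_{L^2(\Omega)}$ for every $\beta$; therefore
\[
\abs{c_\alpha} \;\le\; \frac{V(\Omega)\,\norm{f}_{L^2(\Omega)}}{\norm{z^\alpha}_{L^2(P)}^2\,\norm{z^{-\alpha}}_{L^2(P)}} \;=\; V(\Omega)\,\norm{f}_{L^2(\Omega)}\prod_{j=1}^n\frac{1}{\norm{z_j^{\alpha_j}}_{L^2(A_j)}^2\,\norm{z_j^{-\alpha_j}}_{L^2(A_j)}} .
\]
Consequently $\sum_\alpha\abs{c_\alpha}\,\abs{z^\alpha}$ is bounded by $V(\Omega)\,\norm{f}_{L^2(\Omega)}$ times a product over $j$ of one–variable sums, and the whole matter reduces to an estimate on a single annulus.

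That estimate is the crux. Using $\norm{w^k}_{L^2(A)}^2=\pi(R^{2k+2}-\rho^{2k+2})/(k+1)$ for $k\neq-1$ (and $2\pi\log(R/\rho)$ for $k=-1$), one finds $\norm{w^k}_{L^2(A)}^2\,\norm{w^{-k}}_{L^2(A)}\sim\pi^{3/2}R^{2k+2}\rho^{1-k}/\bigl((k+1)\sqrt{k-1}\bigr)$ as $k\to+\infty$, and symmetrically as $k\to-\infty$. Hence the one–variable sum $\sum_{k\in\Z}\abs{z_j}^k\,\bigl(\norm{z_j^k}_{L^2(A_j)}^2\,\norm{z_j^{-k}}_{L^2(A_j)}\bigr)^{-1}$ is dominated by a convergent geometric series precisely when $\rho_j^2/R_j<\abs{z_j}<R_j^2/\rho_j$ — the ratios $\abs{z_j}\rho_j/R_j^2$ and $\rho_j^2/(\abs{z_j}R_j)$ beating the polynomial corrections. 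Feeding this back, the Laurent series of $\B_\Omega\barf$ converges absolutely and locally uniformly on the enlarged polyannulus $\widetilde P:=\prod_j\{\rho_j^2/R_j<\abs{z_j}<R_j^2/\rho_j\}$; since $\rho_j^2/R_j<\rho_j$ and $R_j<R_j^2/\rho_j$, $\widetilde P$ strictly contains $\overline P$, hence contains a full neighborhood of every point of $bP\cap b\Omega$. (The case $\Omega=P$ of this computation is Proposition~\ref{prop:ExtnProdAnn}.)

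Finally, the gluing. The single series $\sum_\alpha c_\alpha z^\alpha$ converges normally on $\Omega$ — to $\B_\Omega\barf$ — and on $\widetilde P$, to some $g\in\mathcal O(\widetilde P)$; because a convergent series has a unique sum, $\B_\Omega\barf$ and $g$ agree on $\Omega\cap\widetilde P$, so together they define one holomorphic function on $\Omega\cup\widetilde P$ (connected, since both pieces are connected and overlap in $P$) extending $\B_\Omega\barf$. As $\Omega\cup\widetilde P\supsetneq\Omega$ on a neighborhood of $bP\cap b\Omega$, this is the desired extension. I expect the only genuine obstacle to be the annulus estimate above: the Bergman normalization puts $\norm{z^\alpha}^2$ — squared — in the denominator of $c_\alpha$, which over-compensates $\abs{z^\alpha}$ by a definite margin ($R^2/\rho$ rather than $R$), and this surplus is precisely the extra smoothing being claimed.
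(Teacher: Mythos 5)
Your proof is correct and takes essentially the same route as the paper: the same Laurent--coefficient formula for $\B_\Omega\barf$, the same monotonicity $\norm{z^\alpha}_{L^2(P)}\le\norm{z^\alpha}_{L^2(\Omega)}$ to reduce to the product of annuli, and the same one--variable annulus asymptotics producing the enlarged polyannulus $\prod_j\A\left(\rho_j^2/R_j,\,R_j^2/\rho_j\right)$. The only difference is packaging: you verify normal convergence of the monomial series on the open enlarged polyannulus directly, whereas the paper checks $L^2(P')$ membership on a compactly contained subproduct by comparing $\B_\Omega\barf$ with $\B_P\barf$ and invoking Proposition~\ref{prop:ExtnProdAnn}.
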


In the next section, we go over some basic facts about the Bergman projection and some examples that lead into the main theorem. We prove the main statements in Section \ref{sec:MainThmPf}. We conclude with some further remarks in the last section.

%%%%%%%%%%%%%%%%%%%%%%%%%%%%%%%%%%%%%%%%%%%%%%%%%%%%%%%%%%%%%%%%%%%%%%%%%%%%

\section{Setup and Examples}\label{sec:BergProj}

\subsection{Basic Setup} In this section, we focus on bounded Reinhardt domains. Let $\Omega$ be a bounded Reinhardt domain in $\C^n$; that is, if $(z_1,\ldots,z_n)\in\Omega$ and $\theta_1,\ldots,\theta_n\in\R$, then $\big(e^{i\theta_1}z_1,\ldots,e^{i\theta_n}z_n)\in\Omega$.
We denote the Hilbert space of square integrable functions with respect to the Lebesgue measure $dV$ by $L^2(\Omega)$.
The subspace of holomorphic functions in $L^2(\Omega)$ is called the Bergman space and it is denoted by $A^2(\Omega)$. It follows from the mean value property of holomorphic functions that $A^2(\Omega)$ is a closed subspace of $L^2(\Omega)$. Hence, there exists an orthogonal projection operator from $L^2(\Omega)$ onto $A^2(\Omega)$ and it is called the Bergman projection $\BP$.

The set (or a subset) of monomials $\{z^{\alpha}:\alpha\in\Z^n\}$ forms an orthogonal basis for $A^2(\Omega)$.
Let $c_{\alpha}^2=\int_{\Omega}|z^{\alpha}|^2dV(z)$.
Then, the set $\left\{z^{\alpha}/c_{\alpha}\right\}$ forms an orthonormal basis for the Bergman space.
It turns out that $A^2(\Omega)$ is a reproducing kernel Hilbert space and we have the following representation for the Bergman kernel by using this orthonormal basis,  
$$B_{\Omega}(z,w)=\sum_{\alpha\in\Z^n}\frac{z^{\alpha}\bar{w}^{\alpha}}{c_{\alpha}^2}$$
and the Bergman projection is the integral operator associated to this kernel,
$$\BP f=\int_{\Omega}B_{\Omega}(z,w)f(w)dV(w)$$
for any $f\in L^2(\Omega)$. We refer to \cite{KrantzBook} for more on the basics of the Bergman projection.

Originally defined only on $L^2(\Omega)$, it is known that $\BP$ is a bounded operator on other function spaces under various geometric conditions. We refer to \cite{BoasStraubeSurvey} for a survey of results on Sobolev spaces and to \cite{McNSte94, ZeytuncuTran} and the references therein for a sample of results on $L^p$ spaces. 

%%%%%%%%%%%%%%%%%%%%%%%%%%%%%%%%%%%%%%%%%%%%%%%%%%%%%%%%%%%%%%%%%%%%%%%%%%%%

\subsection{Examples}\label{sec:Eg}

We begin by studying a motivating example - an annulus in $\C$.
An elementary calculation yields that the Bergman projection of a conjugate holomorphic function in $L^2$ extends holomorphically to a larger domain.
For $0\le r<R$, let $\A(r,R)$ denote the annulus $\{r<\abs{z}<R\}$.
\begin{lemma}
Let $f\in A^2\left(\A(r,1)\right)$ for $0<r<1$. Then, $\B_{\A(r,1)}\bar{f}$ is holomorphic extends holomorphically to $\A\left(r^2,\dfrac{1}{r}\right)$.
\end{lemma}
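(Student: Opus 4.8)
The plan is to use that the Bergman projection $\B_{\A(r,1)}$ is diagonal in the orthonormal basis $\{z^\alpha/c_\alpha:\alpha\in\Z\}$, compute $\B_{\A(r,1)}\barf$ explicitly, and read off the radii of convergence of its Laurent series from the decay rate of its coefficients.

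First I would write the Laurent expansion $f(z)=\sum_{\alpha\in\Z}a_\alpha z^\alpha$, which converges to $f$ in $A^2(\A(r,1))$. Because the annulus is rotation invariant, $\int_{\A(r,1)}z^\gamma\,dV=0$ for every $\gamma\ne0$, so only the constant term survives when we integrate $f\cdot z^\alpha$:
$$\ip{\barf}{z^\alpha}_{L^2(\A(r,1))}=\overline{\int_{\A(r,1)}f(z)\,z^\alpha\,dV(z)}=\pi(1-r^2)\,\overline{a_{-\alpha}}.$$
Expanding $\B_{\A(r,1)}\barf$ in the basis $\{z^\alpha/c_\alpha\}$ and relabelling $\alpha\mapsto -k$ yields
$$\B_{\A(r,1)}\barf(z)=\pi(1-r^2)\sum_{k\in\Z}\frac{\overline{a_k}}{c_{-k}^2}\,z^{-k},$$
with convergence in $L^2(\A(r,1))$. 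The structural point worth isolating is the reversal of roles: the coefficient of $z^{-k}$ in $\B_{\A(r,1)}\barf$ is controlled by $a_k$, the $k$-th Laurent coefficient of $f$ itself.

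Second I would compute the normalizing constants explicitly in polar coordinates, $c_\alpha^2=\pi(1-r^{2\alpha+2})/(\alpha+1)$ for $\alpha\ne-1$ and $c_{-1}^2=2\pi\log(1/r)$, and extract their two-sided asymptotics: $c_\alpha^2$ is comparable to $1/\abs{\alpha}$ as $\alpha\to+\infty$ and to $r^{2\alpha+2}/\abs{\alpha}$ as $\alpha\to-\infty$. Combining this with the Bessel bound $\abs{a_k}\le\norm{f}_{A^2(\A(r,1))}/c_k$, which follows from $\sum_k\abs{a_k}^2c_k^2=\norm{f}_{A^2(\A(r,1))}^2<\infty$, the estimate $\abs{a_k}/c_{-k}^2\le\norm{f}_{A^2(\A(r,1))}/(c_k\,c_{-k}^2)$ shows that, up to a polynomial factor in $m$, the modulus of the coefficient of $z^{m}$ in the displayed series is $\le r^{m}$ for $m\ge0$ while that of $z^{-m}$ is $\le r^{2m}$ for $m\ge1$. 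Hence the series converges uniformly on compact subsets of $\A(r^2,1/r)$ and defines a holomorphic function there that agrees with $\B_{\A(r,1)}\barf$ on $\A(r,1)$; since $r^2<r<1<1/r$, this is the claimed strictly larger annulus.

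Everything here is an elementary computation, so the only delicate point is the behaviour of $c_\alpha^2$ as $\alpha\to-\infty$, where the factor $r^{2\alpha+2}$ rather than $1$ dominates the numerator and $1-r^{2\alpha+2}$ changes sign; tracking it correctly is exactly what produces the two distinct enlargement factors $r^2$ (inside) and $1/r$ (outside), the discrepancy arising because this exponentially large factor enters to the first power through the bound on $\abs{a_k}$ but to the second power through the denominator $c_{-k}^2$. This explicit annulus is the local model that Proposition \ref{prop:ExtnProdAnn} extends to products of annuli, and that Theorem \ref{thm:MainLocal} then globalizes by a patching argument.
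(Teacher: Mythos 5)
Your proposal is correct and follows essentially the same route as the paper: expand $f$ in the monomial basis, use rotation invariance to see that $\ip{\barf}{z^\alpha}$ picks out $c_0^2\,\overline{a_{-\alpha}}$, compute the norms $c_\alpha^2$ explicitly, and read off the radii $r^2$ and $1/r$ from the asymptotics of the resulting coefficients. The only difference is expository: the paper reorganizes the series into powers of $r^2/z$ and $z$ and cites convergence directly, whereas you make the needed coefficient decay explicit via the Bessel bound $\abs{a_k}\le \norm{f}/c_k$ — a slightly more careful version of the same estimate.
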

\begin{proof}
For $j\in\Z$, let $c_j=\norm{z^j}_{L^2(\A(r,1))}$ so that $\{z^j/c_j\}$ is an orthonormal basis for $A^2(\A(r,1))$.
Let us use the Laurent expansion $f=\sum_j\, f_jz^j$ of $f\in A^2(\A(r,1))$ to compute $\B_{\A(r,1)}\bar{f}$.
\[\B_{\A(r,1)}\bar{f} = c_0^2\sum\limits_{j=-\infty}^\infty \frac{\overline{f_{-j}}}{c_{j}^2} z^j\]
since, for $j\in\Z$,
\[\ip{\bar{f}}{\frac{z^j}{c_j}}
=\sum\limits_{k=-\infty}^\infty\overline{f_k} \int\limits_{\A(r,1)}\,\bar{z}^k\frac{\bar{z}^j}{c_j}\, dA(w)
=\frac{c_0^2}{c_j}\overline{f_{-j}}.\]
Calculating the $c_j$'s and organizing the terms suitably yields
\[
\B_{\A(r,1)}\bar{f}(z)
= \left(\frac{1-r^2}{-2\ln r}\right)\frac{\overline{f_1}}{z} + \left(\frac{1-r^2}{r^2}\right)\sum\limits_{j=2}^{\infty}\frac{j-1}{1-r^{2(j-1)}}\cdot \frac{\overline{f_j}r^{2j}}{z^j} + (1-r^2)\sum\limits_{j=0}^{\infty}\frac{j+1}{1-r^{2(j+1)}}\cdot \overline{f_{-j}}z^j.
\]
The conclusion follows by noting that the middle term converges (uniformly absolutely on compact subsets of) on $\abs{r^2/z} < 1$ and the last term on $\abs{1/z} > r$.
\end{proof}
A simple scaling argument gives us the following generalization.
\begin{cor}\label{cor:GenAnnExtn}
Let $f\in A^2\left(\A(r,R)\right)$ for $0<r<R$. Then, $\B_{\A(r,R)}\bar{f}$ extends holomorphically to $\A\left(\dfrac{r^2}{R},\dfrac{R^2}{r}\right)$.
\end{cor}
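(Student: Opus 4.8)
The plan is to deduce Corollary~\ref{cor:GenAnnExtn} from the Lemma by exploiting the covariance of the Bergman projection under biholomorphisms, applied to the real dilation $\phi(z)=Rz$, which carries $\A(r/R,1)$ onto $\A(r,R)$.

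First I would record the transformation law in the form needed here: if $\phi\colon D_1\to D_2$ is biholomorphic and $F\in L^2(D_2)$, then
\[
\B_{D_1}\bigl((F\circ\phi)\cdot\phi'\bigr)(z)\;=\;\phi'(z)\cdot\bigl(\B_{D_2}F\bigr)(\phi(z)),
\]
which follows from the kernel identity $B_{D_1}(z,w)=\phi'(z)\,B_{D_2}(\phi(z),\phi(w))\,\overline{\phi'(w)}$ and a change of variables. Taking $D_1=\A(r/R,1)$, $D_2=\A(r,R)$, $\phi(z)=Rz$ (so $\phi'\equiv R$), and $F=\bar f$ for $f\in A^2(\A(r,R))$, the key point is that because $R>0$ is real, $\bar f\circ\phi$ is again the conjugate of a holomorphic function: writing $g(z)=f(Rz)$, one has $g\in A^2(\A(r/R,1))$ (the map $F\mapsto(F\circ\phi)\phi'$ is unitary and preserves holomorphy) and $(\bar f\circ\phi)(z)=\overline{f(Rz)}=\bar g(z)$. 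The constant factors $\phi'\equiv R$ cancel from both sides, so the transformation law collapses to
\[
\B_{\A(r,R)}\bar f(w)\;=\;\B_{\A(r/R,1)}\bar g\,(w/R).
\]

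Next I would apply the Lemma with $\rho=r/R\in(0,1)$ to conclude that $\B_{\A(r/R,1)}\bar g$ extends holomorphically to $\A\bigl((r/R)^2,\,R/r\bigr)=\A\bigl(r^2/R^2,\,R/r\bigr)$. Precomposing with the dilation $w\mapsto w/R$ then shows $\B_{\A(r,R)}\bar f$ extends holomorphically to $R\cdot\A\bigl(r^2/R^2,\,R/r\bigr)=\A\bigl(r^2/R,\,R^2/r\bigr)$, which is the claimed region, and it strictly contains $\A(r,R)$ since $r^2/R<r<R<R^2/r$.

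There is no real obstacle here; the only points that need a moment's care are the correct placement of the Jacobian factor $\phi'$ in the transformation law and the observation that a real positive dilation commutes with conjugation in the sense above, so the Lemma applies verbatim to the pulled-back function. One could instead bypass the transformation law and rerun the Laurent-series computation of the Lemma directly on $\A(r,R)$, where now $c_j^2=\tfrac{\pi}{j+1}\bigl(R^{2j+2}-r^{2j+2}\bigr)$ for $j\neq-1$ and $c_{-1}^2=2\pi\ln(R/r)$; but the scaling argument is shorter and makes the geometry of the extension transparent.
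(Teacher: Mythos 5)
Your proposal is correct and is exactly the ``simple scaling argument'' the paper invokes without detail: you pull back by the real dilation $\phi(z)=Rz$, use the transformation law (with the constant Jacobian cancelling), note that conjugation commutes with the real dilation so the Lemma applies to $\bar g$, and rescale the extension region to $\A(r^2/R,R^2/r)$. The bookkeeping is accurate, so this matches the paper's intended route.
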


If we consider an annulus with inner radius $0$, that is, the punctured disc, we are in a rather simple situation where the Bergman projection of an $L^2$ conjugate holomorphic function is a constant; in particular, an entire function.

\begin{eg}\label{eg:PuncDisc}
Consider the punctured disc $\D\setminus\{0\}\subset\C$ and let $f\in A^2(\D\setminus\{0\})$.
Since $f$ has a removable singularity at $0$ it extends to a function in $A^2(\D)$ which we denote by $f$ as well.
In particular, $A^2(\D\setminus\{0\})$ and $A^2(\D)$ are spanned by $\{z^j:j\ge 0\}$.
Since, the $L^2$ norms on $D$ and $D\setminus\{0\}$ agree, the Bergman projections on $\D\setminus\{0\}$ and $\D$ agree and we have
\[\B_{\D\setminus\{0\}}\bar{f} = \B_{\D}\bar{f} = \overline{f(0)},\]
a constant.
\end{eg}

The following example of the Hartogs triangle, in which the origin is in the closure of the domain, however, presents a much subtler picture.
The global version of the main theorem fails to hold whereas the local version holds everywhere except at the origin.
This highlights the importance of the assumption that $\overline{\Omega}$ not intersect the coordinate axes in Theorem \ref{thm:Main}.

\begin{eg}[Hartogs triangle]\label{eg:HartogsTri}
Consider the Hartogs triangle $\Omega=\{\abs{z}< \abs{w} < 1\}\subset\C^2$. Then, $A^2(\Omega)$ is spanned by $\{z^jw^k\,:\,j\ge 0, j+k+1\ge 0\}$. Furthermore, note that for $j,\alpha \ge 0$, $j+k+1\ge 0$, and $\alpha+\beta+1 \ge 0$,
\[\langle\bar{z}^\alpha\bar{w}^\beta,z^j w^k \rangle = \int\limits_\Omega\, \bar{z}^\alpha\bar{w}^\beta\bar{z}^j\bar{w}^k\, dV = \begin{cases}
\operatorname{vol}(\Omega) &\text{if } \alpha=j=0 \text{ and } \beta=-k,\\
0 & \text{otherwise}.
\end{cases}\]
Therefore, for $ f=\sum\limits_{j\ge 0,\,  j+k+1\ge 0}f_{j,k}z^jw^k\in A^2(\Omega)$,
\[
\B_\Omega\bar{f}
= \sum\limits_{\substack{j\ge 0,\\  j+k+1\ge 0}}\ \sum\limits_{\substack{\alpha\ge 0,\\ \alpha+\beta+1\ge 0}}\overline{f_{\alpha,\beta}}\ip{\bar{z}^\alpha\bar{w}^\beta}{z^jw^k}\frac{z^jw^k}{\norm{z^jw^k}^2}
= \frac{a}{w} + b+ cw
\]
for some constants $a$, $b$, and $c$.

Notice that there is a larger domain $\Omega'$ to which the functions $\B_\Omega\bar{f}$ extend holomorphically but $\overline{\Omega}\not\subset\Omega'$.

By considering the {\it fat} Hartogs triangle (see \cite{EdholmMcNeal1,EdholmMcNeal2}) $\Omega=\{\abs{z}^\gamma< \abs{w} < 1\}$, for $\gamma>0$, we can arrange $\B_\Omega\bar{f}$ to have (up to) any prescribed finite order of blow-up at $0$.
\end{eg}

%%%%%%%%%%%%%%%%%%%%%%%%%%%%%%%%%%%%%%%%%%%%%%%%%%%%%%%%%%%%%%%%%%%%%%%%%%%%

\section{Main Theorem and its Proof}\label{sec:MainThmPf}

We begin with a proposition that proves the main theorem for a product of annuli.
We will use this result to prove the local version of the main theorem which in turn is used to prove the main theorem.

\begin{prop}\label{prop:ExtnProdAnn}
Let $P\subset\C^n$ be a product of annuli such that $\overline{P}$ does not intersect the coordinate axes.

Then, there exists a product of annuli $P'\subset\C^n$ such that 
\begin{enumerate}[(i)]\itemsep\medskipamount
\item $\overline{P}\subset P'$,
\item $\overline{P'}$ does not intersect the coordinate axes, and
\item for every $f\in A^2(P)$,  $\B_P\bar{f}\in A^2(P')$. i.e., there exists $F\in A^2(P')$ such that $\left.F\right\vert_P = \B_P\bar{f}$.
\end{enumerate}
\end{prop}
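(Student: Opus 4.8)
The plan is to reduce to a one-variable computation by iterating the single-annulus lemma (Corollary~\ref{cor:GenAnnExtn}) coordinate by coordinate. Write $P = \A(r_1,R_1)\times\cdots\times\A(r_n,R_n)$ with all $r_j>0$, which is exactly the hypothesis that $\overline{P}$ avoids the coordinate axes. The key structural fact is that on a product domain the Bergman projection factors: $B_P(z,w) = \prod_{j=1}^n B_{\A(r_j,R_j)}(z_j,w_j)$, and consequently $\B_P = \B_{\A(r_1,R_1)}^{(1)}\circ\cdots\circ\B_{\A(r_n,R_n)}^{(n)}$, where the superscript indicates the projection acts in the $j$-th variable only. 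Since conjugation also factors through the variables, I would first verify that for $f\in A^2(P)$ the function $\B_P\bar f$ is obtained by applying the one-variable operator $\bar g\mapsto \B_{\A(r_j,R_j)}\bar g$ successively in each slot.

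Next, I would make the monomial expansion explicit. Expand $f = \sum_{\alpha\in\Z^n} f_\alpha z^\alpha$ (a Laurent series convergent in $L^2(P)$), with $\sum_\alpha |f_\alpha|^2 c_\alpha^2 < \infty$ where $c_\alpha^2 = \prod_j \|z_j^{\alpha_j}\|_{L^2(\A(r_j,R_j))}^2$ because the $L^2$ norm on a product splits as a product. A direct pairing computation, generalizing the one in the single-annulus lemma, gives
\[
\B_P\bar f \;=\; \Big(\prod_{j=1}^n \|1\|_{L^2(\A(r_j,R_j))}^2\Big)\sum_{\alpha\in\Z^n}\frac{\overline{f_{-\alpha}}}{c_\alpha^2}\,z^\alpha,
\]
so the coefficients of $\B_P\bar f$ are, up to the explicit constants $c_\alpha^{-2}$, just the reflected Fourier--Laurent coefficients of $\bar f$. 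The point is that the single-variable lemma already tells us the growth rate of $c_\alpha^{-2}$ in each index: in the $j$-th variable the coefficient picks up a factor comparable to $(r_j^2/R_j)^{|\alpha_j|}$ on the negative side and a decay like $(\text{something} < 1)^{|\alpha_j|}$ against $r_j^{|\alpha_j|}$ on the positive side, exactly as displayed in that proof. Multiplying these one-variable estimates over $j$ shows that the series for $\B_P\bar f$ converges uniformly on compact subsets of the larger polyannulus $P' := \prod_{j=1}^n \A\!\big(r_j^2/R_j,\, R_j^2/r_j\big)$, which satisfies (i) and (ii) by inspection, and the limit $F$ is holomorphic there with $F|_P = \B_P\bar f$, giving (iii).

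The main obstacle is bookkeeping rather than conceptual: I need to make sure the $\ell^2$-summability of $(f_\alpha c_\alpha)$ — all that is guaranteed for an arbitrary $f\in A^2(P)$ — is genuinely enough to force locally uniform convergence of $\sum_\alpha \overline{f_{-\alpha}}\,c_\alpha^{-2} z^\alpha$ on $P'$. This works because for $z$ in a compact subset of $P'$ one has $|z^\alpha|\,c_\alpha^{-2} = \big(|z^\alpha| c_\alpha^{-1}\big)\cdot c_\alpha^{-1}$, the first factor is bounded by $(\theta_1^{|\alpha_1|}\cdots\theta_n^{|\alpha_n|})$ for some $\theta_j<1$ by the single-annulus growth rates applied slotwise, and then Cauchy--Schwarz against $\sum |f_{-\alpha}|^2 c_\alpha^2<\infty$ closes the estimate, with the geometric factors absorbing the remaining $c_\alpha^{-1}$. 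I would isolate the needed one-variable growth estimate as a small computational lemma (essentially already contained in the proof of the single-annulus lemma) and then let the product structure do the rest; a brief scaling remark reduces each factor to the normalized annulus $\A(r_j/\sqrt{R_j},\sqrt{R_j})$ if one prefers to quote Corollary~\ref{cor:GenAnnExtn} verbatim.
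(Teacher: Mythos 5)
Your overall route is the same as the paper's: expand $f$ in the monomial basis, observe that $\B_P\bar f = c_0^2\sum_\alpha \overline{f_{-\alpha}}\,z^\alpha/c_\alpha^2$ (your formula agrees with the paper's, where $c_\alpha=\norm{z^\alpha}_{L^2(P)}$), and reduce the needed estimate to one variable using the fact that all the norms factor over the product. The paper packages the convergence as a Parseval computation of $\norm{\B_P\bar f}_{L^2(P')}^2$ rather than a pointwise Cauchy--Schwarz, but that difference is cosmetic.

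However, the key estimate as you state it does not close, and the problem is exactly at the index you gloss over. What is finite for an arbitrary $f\in A^2(P)$ is $\sum_\alpha\abs{f_{-\alpha}}^2c_{-\alpha}^2=\norm{f}_{L^2(P)}^2$, \emph{not} $\sum_\alpha\abs{f_{-\alpha}}^2c_\alpha^2$: the ratio $c_{-\alpha}^2/c_\alpha^2$ is unbounded over $\alpha\in\Z^n$ (for a single annulus $\A(r,R)$ it grows like $(rR)^{-2j}$ in one direction of $j$ unless $rR=1$), so one can choose $f\in A^2(P)$ making your proposed weight sum diverge. Consequently Cauchy--Schwarz must be run against $c_{-\alpha}$, and the quantity that has to decay geometrically on compact subsets of $P'$ is $\abs{z^\alpha}/(c_{-\alpha}c_\alpha^2)$, not $\abs{z^\alpha}/c_\alpha$. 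Indeed your claim that $\abs{z^\alpha}c_\alpha^{-1}\le\theta^{\abs{\alpha}}$ fails on part of the target domain: in one variable, for $R<\abs{z}<R^2/r$ one has $\abs{z}^j/c_j\asymp\sqrt{j}\,(\abs{z}/R)^j\to\infty$ as $j\to+\infty$. The correct combination does work: $\abs{z}^j/(c_{-j}c_j^2)\asymp j^{3/2}\left(\abs{z}r/R^2\right)^j$ as $j\to+\infty$ and $\asymp\abs{j}^{3/2}\left(r^2/(\abs{z}R)\right)^{\abs{j}}$ as $j\to-\infty$, which is geometric precisely on $\A(r^2/R,R^2/r)$, in agreement with Corollary~\ref{cor:GenAnnExtn}. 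This asymmetric factor $c_{-\alpha}$ is the whole point of the paper's sufficient condition $\norm{z^\alpha}_{L^2(P')}\le C\,c_{-\alpha}c_\alpha^2$, and it is also where the hypothesis that $\overline{P}$ avoids the coordinate axes enters (it guarantees $c_{-\alpha}<\infty$ with the right growth). Once you replace the weight $c_\alpha^2$ by $c_{-\alpha}^2$ and bound $\abs{z^\alpha}/(c_{-\alpha}c_\alpha^2)$ instead, your argument goes through; finally, since conclusion (iii) asks for $F\in A^2(P')$ and holomorphy on your maximal $P'$ alone does not give square-integrability there, you should shrink $P'$ to a relatively compact product of annuli still containing $\overline{P}$.
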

\begin{proof}
Let $P'\subset\C^n$ be any product of annuli such that $\overline{P}\subset P'$ and $\overline{P'}$ does not intersect the coordinate axes.
We will place further restrictions on $P'$ to ensure that it satisfies the other conclusions in the statement.

For $\alpha\in\Z^n$, let $d_\alpha=\norm{z^{\alpha}}_{L^2(P)}$ and $e_\alpha=\norm{z^{\alpha}}_{L^2(P')}$ so that $\{z^{\alpha}/d_\alpha\}$ and $\{z^{\alpha}/e_\alpha\}$ are orthonormal bases for $A^2(P)$ and $A^2(P')$ respectively.
Let $f=\sum_{\alpha}f_\alpha z^\alpha\in A^2(P)$.
Then,
\[\B_P\bar{f}=\sum\limits_{\alpha\in\Z^n}\ip{\bar{f}}{\frac{z^\alpha}{d_\alpha}}_{L^2(P)}\frac{z^\alpha}{d_\alpha}
=\sum\limits_{\alpha,\beta\in\Z^n}\overline{f_\beta}\ip{\bar{z}^\beta}{\frac{z^\alpha}{d_\alpha}}_{L^2(P)}\frac{z^\alpha}{d_\alpha}
=\sum\limits_{\alpha\in\Z^n}\overline{f_{-\alpha}}\left(\frac{d_0^2}{d_\alpha}\right)\frac{z^\alpha}{d_\alpha}\]
and hence
\begin{align*}
\norm{\B_P\bar{f}}_{L^2(P')}^2&=\sum\limits_{\alpha\in\Z^n}\abs{\ip{\B_P\bar{f}}{\frac{z^\alpha}{e_\alpha}}_{L^2(P')}}^2
=\sum\limits_{\alpha\in\Z^n}\abs{\sum\limits_{\beta\in\Z^n}\overline{f_{-\beta}}\left(\frac{d_0^2}{d_\beta^2}\right)\ip{z^\beta}{\frac{z^\alpha}{e_\alpha}}_{L^2(P')}}^2\\
&=\sum\limits_{\alpha\in\Z^n}\abs{\overline{f_{-\alpha}}\left(\frac{d_0^2}{d_\alpha^2}\right)e_\alpha}^2
=d_0^4\sum\limits_{\alpha\in\Z^n}\abs{f_{-\alpha}}^2d_{-\alpha}^2\left(\frac{e_\alpha}{d_{-\alpha}d_\alpha^2}\right)^2.
\end{align*}
Since $\sum_\alpha\abs{f_{-\alpha}}^2d_{-\alpha}^2=\norm{f}_{L^2(P)}^2 < \infty$, it suffices to choose $P'$ so that $e_\alpha\le d_{-\alpha}d_{\alpha}^2$ for all $\alpha\in\Z^n$.
That is, we need to find $P'$ so that the following holds: \begin{equation}\label{eqn:CompEstAnn}
\int\limits_{P'} \abs{z^\alpha}^{2}\,dV(z) \le C \left(\int\limits_{P} \abs{z^\alpha}^2\,dV(z)\right)^2\int\limits_{P} \abs{z^{-\alpha}}^2\,dV(z)
\end{equation}
for all $\alpha\in\Z^n$ and some $C>0$ which is independent of $\alpha$.

Since $P$ and $P'$ are products of annuli, we will be done if we show the following: given an annulus $\A\subset\C$ there exists an annulus $\A'\subset\C$ such that $\overline{\A}\subset \A'$, $0\notin\overline{\A'}$, and the pair $\A$ and $\A'$ satisfy the above inequality.
Suppose $\A=\A(r,R)$ for some $0<r<R$.
Then, one can check that $\A'=\A(r',R')$ satisfies the above criteria for any $r'$ and $R'$ satisfying
\[\frac{r^2}{R}<r'<R'<\frac{R^2}{r}.\]
This is in agreement with the conclusion of Corollary \ref{cor:GenAnnExtn} which guaranteed holomorphic extension of $\B_{\A}\bar{f}$, for $f\in A^2(\A)$, to $\A(r^2/R,R^2/r)$ from which it follows that the extension is $L^2$ in any compact subset of $\A(r^2/R,R^2/r)$. i.e., $\B_{\A}\bar{f}\in A^2(\A(r',R'))$.
\end{proof}

Note that the closure of $P$ not intersecting the coordinate axes was crucial in the above proof.
This was highlighted earlier in Example \ref{eg:HartogsTri} as well.
Let us now proceed to prove the local version of the main theorem.

\begin{thm}[Local Version]\label{thm:MainLocalFull}
Let $\Omega\subset\mathbb{C}^n$ be a bounded Reinhardt domain and $P\subset\Omega $ be a product of annuli such that $\overline{P}$ does not intersect the coordinate axes.

Then, for every $f\in A^2(\Omega)$, $\B_\Omega\bar{f}$ extends holomorphically past $bP\cap b\Omega$.

More precisely, there exists a product of annuli $P'$ such that
\begin{enumerate}[(i)]\itemsep\medskipamount
\item $\overline{P}\subset P'$, and hence $bP\cap b\Omega\subset P'$,
\item $\overline{P'}$ does not intersect the coordinate axes, and
\item for every $f\in A^2(\Omega)$,  $\B_\Omega\bar{f}\in A^2(P')$. i.e., there exists $F\in A^2(P')$ such that $\left.F\right\vert_P = \left.\B_\Omega\bar{f}\right\vert_P$.
\end{enumerate}
\end{thm}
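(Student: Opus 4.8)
The plan is to deduce the local statement directly from Proposition~\ref{prop:ExtnProdAnn}, using the inclusion $P\subset\Omega$ to push the relevant comparison estimate in the favorable direction. Set $S=\{\alpha\in\Z^n:z^\alpha\in A^2(\Omega)\}$ and, for $\alpha\in S$, let $a_\alpha=\norm{z^\alpha}_{L^2(\Omega)}$; for every $\alpha\in\Z^n$ put $d_\alpha=\norm{z^\alpha}_{L^2(P)}$, which is finite since $\overline{P}$ is a compact subset of $(\C\setminus\{0\})^n$. Because $P\subset\Omega$ one has $d_\alpha\le a_\alpha$ for all $\alpha\in S$. Writing $f=\sum_{\beta\in S}f_\beta z^\beta$ and using that Reinhardt invariance of $\Omega$ forces $\int_\Omega\overline{z^{\alpha+\beta}}\,dV=0$ unless $\alpha+\beta=0$, the computation from the proof of Proposition~\ref{prop:ExtnProdAnn} carries over (now with the sum taken over $S$) and yields
\[\B_\Omega\bar{f}=\sum_{\alpha\in S}c_\alpha z^\alpha,\qquad c_\alpha=\frac{a_0^2}{a_\alpha^2}\,\overline{f_{-\alpha}},\]
with the convention $f_{-\alpha}=0$ when $-\alpha\notin S$; thus $c_\alpha\ne0$ only when $\alpha\in S$ and $-\alpha\in S$.

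For the target domain I would take $P'$ to be a product of annuli furnished by Proposition~\ref{prop:ExtnProdAnn} applied to $P$, so that $\overline{P}\subset P'$, $\overline{P'}$ misses the coordinate axes, and \eqref{eqn:CompEstAnn} holds; in terms of $e_\alpha=\norm{z^\alpha}_{L^2(P')}$ this says $e_\alpha^2\le C\,d_\alpha^4\,d_{-\alpha}^2$ for all $\alpha\in\Z^n$, with $C$ independent of $\alpha$. Conclusions (i) and (ii) are then immediate, as is $bP\cap b\Omega\subset\overline{P}\subset P'$, so only (iii) remains. Since $\{z^\alpha/e_\alpha\}_{\alpha\in\Z^n}$ is an orthonormal basis of $A^2(P')$, a holomorphic $\sum_\alpha c_\alpha z^\alpha$ on $P'$ lies in $A^2(P')$ exactly when $\sum_\alpha\abs{c_\alpha}^2e_\alpha^2<\infty$, and I would bound this sum by
\[\sum_{\alpha\in\Z^n}\abs{c_\alpha}^2e_\alpha^2=a_0^4\sum_{\substack{\alpha\in S\\-\alpha\in S}}\frac{\abs{f_{-\alpha}}^2}{a_\alpha^4}\,e_\alpha^2\le a_0^4C\sum_{\substack{\alpha\in S\\-\alpha\in S}}\abs{f_{-\alpha}}^2\,\frac{d_\alpha^4}{a_\alpha^4}\,d_{-\alpha}^2\le a_0^4C\sum_{\gamma\in S}\abs{f_{\gamma}}^2a_{\gamma}^2=a_0^4C\,\norm{f}_{L^2(\Omega)}^2<\infty,\]
where the first inequality is \eqref{eqn:CompEstAnn} and the second uses $d_\alpha\le a_\alpha$, $d_{-\alpha}\le a_{-\alpha}$ together with the reindexing $\gamma=-\alpha$. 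By Riesz--Fischer the series $\sum_\alpha c_\alpha z^\alpha$ then converges in $A^2(P')$ to some $F\in A^2(P')$.

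It then remains to check that $F$ genuinely continues $\B_\Omega\bar{f}$, i.e.\ $\left.F\right\vert_P=\left.\B_\Omega\bar{f}\right\vert_P$. The same series is the orthonormal expansion of $\B_\Omega\bar{f}\in A^2(\Omega)$ and so converges to it in $A^2(\Omega)$; since $L^2$-convergence of holomorphic functions is locally uniform and $P$ is a connected open subset of both $P'$ and $\Omega$, the two limits must agree on $P$. This gives (iii), and as $P'\supsetneq\overline{P}$ the continuation is past $bP\cap b\Omega$.

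I expect the only delicate point to be the bookkeeping: on a general Reinhardt $\Omega$ not every monomial lies in $A^2(\Omega)$, so one must keep track of the index set $S$ and confirm that the diagonal structure of $\ip{\bar{z}^\beta}{z^\alpha}_{L^2(\Omega)}$ and the termwise estimate survive over $S$. Conceptually, no new and harder estimate is needed precisely because $d_\alpha\le a_\alpha$: shrinking the domain from $\Omega$ to $P$ only shrinks the monomial norms, so the comparison \eqref{eqn:CompEstAnn} for the pair $(P,P')$ already dominates what is required for the pair $(\Omega,P')$, and the proof reduces to a reprise of Proposition~\ref{prop:ExtnProdAnn}.
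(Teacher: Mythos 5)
Your proposal is correct and follows essentially the same route as the paper: take $P'$ from Proposition~\ref{prop:ExtnProdAnn}, expand $\B_\Omega\bar f$ in monomials, and use $\norm{z^\alpha}_{L^2(P)}\le\norm{z^\alpha}_{L^2(\Omega)}$ to control $\norm{\B_\Omega\bar f}_{L^2(P')}$. The only cosmetic difference is that you inline the comparison estimate \eqref{eqn:CompEstAnn} to bound the sum directly by $\norm{f}_{L^2(\Omega)}^2$, whereas the paper routes the same inequality through the finiteness of $\norm{\B_P\bar f}_{L^2(P')}$; your added care with the index set $S$ and the identity-theorem step is a small bonus, not a divergence.
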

\begin{proof}
Write $f\in A^2(\Omega)$ as $f=\sum_{\alpha}f_\alpha z^\alpha$ with $f_\alpha=0$ when $z^\alpha\notin A^2(\Omega)$. For $\alpha\in\Z^n$, let $c _\alpha = \norm{z^\alpha}_{L^2(\Omega)}$ and $d _\alpha = \norm{z^\alpha}_{L^2(P)}$ noting that some of the $c_\alpha$'s might be infinite.
Hence,
\begin{align*}
\B_\Omega\bar{f} &= \sum\limits_{\alpha\in \Z^n}\,\left\langle \sum\limits_{\beta\in\Z^n}\,\overline{f_\beta} \bar{z}^\beta,\frac{z^\alpha}{c_\alpha}\right\rangle\cdot\frac{z^\alpha}{c_\alpha}  = c^2_0\sum\limits_{\alpha\in\Z^n}\,\overline{f_{-\alpha}}\cdot\frac{z^\alpha}{c^2_\alpha}, \text{ and }\\[3mm]
\B_P\bar{f} &= \sum\limits_{\alpha\in \Z^n}\,\left\langle \sum\limits_{\beta\in\Z^n}\,\overline{f_\beta} \bar{z}^\beta,\frac{z^\alpha}{d_\alpha}\right\rangle\cdot\frac{z^\alpha}{d_\alpha}  = d^2_0\sum\limits_{\alpha\in\Z^n}\,\overline{f_{-\alpha}}\cdot\frac{z^\alpha}{d^2_\alpha}.
\end{align*}
By Proposition \ref{prop:ExtnProdAnn} there exists a larger product of annuli $P'$ such that $\overline{P}\subset P'$ and $\B_P\bar{f}\in A^2(P')$.
If we let $e_\alpha=\norm{z^\alpha}_{L^2(P')}$ for $\alpha\in\Z^n$, we have,
\[
\norm{\B_\Omega\bar{f}}^2_{L^2(P')} = c_0^4\sum\limits_{\alpha\in\Z^n}\abs{f_{-\alpha}}^2\left(\frac{e_\alpha^2}{c_\alpha^4}\right)
\quad\text{and}\quad
\norm{\B_P\bar{f}}^2_{L^2(P')} = d_0^4\sum\limits_{\alpha\in\Z^n}\abs{f_{-\alpha}}^2\left(\frac{e_\alpha^2}{d_\alpha^4}\right) .
\]
We conclude that  $\B_\Omega\bar{f}\in A^2(P')$ since $\norm{\B_P\bar{f}}^2_{L^2(P')}<\infty$ and $d_\alpha \le c_\alpha$ for every $\alpha\in\Z^n$.
\end{proof}

\begin{thm}[Main Theorem]\label{thm:MainFull}
Let $\Omega\subset\mathbb{C}^n$ be a bounded Reinhardt domain with $C^1$ boundary such that $\overline{\Omega}$ does not intersect the coordinate axes.

Then, for every $f\in A^2(\Omega)$, $\B_\Omega\bar{f}$ extends holomorphically to a (strictly) larger domain.

More precisely, there exists a log-convex Reinhardt domain $\Omega'\subset\C^n$ such that 
\begin{enumerate}[(i)]\itemsep\medskipamount
\item $\overline{\Omega}\subset\Omega'$, and
\item for every $f\in A^2(\Omega)$,  $\B_\Omega\bar{f}$ extends holomorphically to $\Omega'$.
\end{enumerate}
\end{thm}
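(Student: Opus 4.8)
The plan is to upgrade the local statement, Theorem \ref{thm:MainLocalFull}, to a global one via a compactness-and-patching argument over the boundary $b\Omega$. Since $\overline{\Omega}$ is compact and disjoint from the coordinate axes, every point $p\in b\Omega$ lies in a small product of annuli $P_p$ with $\overline{P_p}\subset\Omega\cup\{p\}$-type neighborhood; more precisely, because $b\Omega$ is $C^1$ and $\Omega$ is Reinhardt, around each $p$ we can inscribe a product of annuli $P_p\subset\Omega$ so that $p\in bP_p\cap b\Omega$ and $\overline{P_p}$ avoids the axes. Applying Theorem \ref{thm:MainLocalFull} to each such $P_p$ produces a larger product of annuli $P_p'$ with $\overline{P_p}\subset P_p'$, $\overline{P_p'}$ disjoint from the axes, and $\B_\Omega\bar f\in A^2(P_p')$ for every $f\in A^2(\Omega)$; in particular $\B_\Omega\bar f$ extends holomorphically to $P_p'$. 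By compactness of $b\Omega$, finitely many of the neighborhoods $P_p'\cap$ (a neighborhood of $p$) cover $b\Omega$, hence finitely many $P_{p_1}',\dots,P_{p_N}'$ together with $\Omega$ itself cover $\overline{\Omega}$.

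The key point is that all these extensions are restrictions of a single holomorphic function, so the patched extension is well-defined. This is immediate from the explicit formula: on each $P_{p_j}'$ the extension is $F_j = c_0^2\sum_{\alpha}\overline{f_{-\alpha}}\,z^\alpha/c_\alpha^2$ (with $c_\alpha = \norm{z^\alpha}_{L^2(\Omega)}$), the \emph{same} Laurent series, now seen to converge in the larger region; on the overlap $\Omega\cap P_{p_j}'$ it agrees with $\B_\Omega\bar f$, and on $P_{p_j}'\cap P_{p_k}'$ both $F_j$ and $F_k$ agree with this common series (or, if one prefers a softer argument, $\Omega\cap P_{p_j}'\cap P_{p_k}'$ is nonempty and open since both contain points of $b\Omega$ arbitrarily close together, and the identity theorem finishes the job). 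Thus the functions $\B_\Omega\bar f$ on $\Omega$ and $F_j$ on $P_{p_j}'$ glue to a holomorphic function on the open set $U := \Omega\cup P_{p_1}'\cup\dots\cup P_{p_N}'$, which is an open neighborhood of $\overline{\Omega}$ strictly larger than $\Omega$.

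Finally, to get a \emph{Reinhardt} (indeed log-convex) target domain $\Omega'$ as claimed, I would not stop at $U$: since $\Omega$ is Reinhardt and each $P_{p_j}'$ can be chosen Reinhardt, $U$ is Reinhardt, so its image $\log|U^*|$ in $\R^n$ (where $U^* = U\cap(\C^*)^n$) is an open neighborhood of the compact set $\log|\overline{\Omega}^{\,*}|$. Replacing $U$ by the preimage of a slightly shrunk open convex neighborhood of $\log|\overline\Omega^{\,*}|$ contained in $\log|U^*|$ — which exists because a compact set has a convex open neighborhood inside any given open neighborhood of it, after intersecting with a large cube — yields a log-convex Reinhardt domain $\Omega'$ with $\overline\Omega\subset\Omega'\subset U$; a holomorphic function on a log-convex Reinhardt domain is determined by its Laurent series, and our series already converges on $U\supset\Omega'$, so $\B_\Omega\bar f$ extends holomorphically to $\Omega'$.

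The main obstacle I anticipate is the geometric bookkeeping in the first paragraph: verifying that the $C^1$ boundary assumption genuinely allows one to inscribe, at every boundary point, a product of annuli $P_p\subset\Omega$ that reaches that boundary point, uniformly enough that finitely many of the resulting enlargements $P_p'$ cover a neighborhood of $\overline\Omega$. For a Reinhardt domain this reduces to a statement in $\log$-coordinates about the region $\omega = \log|\Omega^*|\subset\R^n$: one needs that around each boundary point of $\omega$ there is an inscribed box $Q_p\subset\omega$ touching that boundary point, which is exactly where $C^1$-ness of $b\Omega$ (hence a mild regularity of $b\omega$) is used. Once that covering is in hand, the analytic continuation and the passage to a log-convex $\Omega'$ are routine.
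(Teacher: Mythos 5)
Your overall strategy --- patch the local result (Theorem \ref{thm:MainLocalFull}) over a finite subcover of the compact boundary, glue via the common Laurent series, and then pass to a log-convex Reinhardt domain --- is the same as the paper's. But the step you flag as ``geometric bookkeeping'' is in fact where your argument breaks and where the paper's key idea lives. You propose to inscribe, at each $p\in b\Omega$, a product of annuli $P_p\subset\Omega$ with $p\in bP_p$. This is impossible in general, even for smooth Reinhardt domains: take $\Omega=\{(z_1,z_2):(\abs{z_1}-2)^2+(\abs{z_2}-2)^2<1\}$ and the boundary point with modulus profile $(2,1)$. Any open box inside the disc $\{(x-2)^2+(y-2)^2<1\}$ whose closure contains $(2,1)$ would have to contain points $(2\pm\delta',1+\eta)$ with $\delta'>0$ fixed and $\eta\to 0^+$, which violate $(\delta')^2<2\eta-\eta^2$; so no inscribed box reaches that boundary point, and the same obstruction occurs at every boundary point where a modulus hyperplane is tangent and the domain curves away from it. The paper avoids this entirely: it does \emph{not} require $P_A$ to touch $A$. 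It chooses a modulus direction transversal to $b\widehat{\Omega}$ at $\widehat{A}$, places $P_A=\A(\abs{a_1}+\epsilon,\abs{a_1}+3\epsilon)\times\prod_{k\ge 2}\A(\abs{a_k}-\epsilon,\abs{a_k}+\epsilon)$ strictly inside $\Omega$, displaced inward, and then uses the \emph{quantitative} enlargement from Proposition \ref{prop:ExtnProdAnn} --- each factor $\A(r,R)$ grows to (an annulus inside) $\A(r^2/R,R^2/r)$, a definite multiplicative gain --- to check that the enlarged $P_A'$ swallows $A$, since $(\abs{a_1}+\epsilon)^2/(\abs{a_1}+3\epsilon)<\abs{a_1}$. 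Without this displacement-plus-definite-gain mechanism the covering of $b\Omega$ does not get off the ground.

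A secondary problem is your final reduction to a log-convex $\Omega'$. You shrink $U$ using the claim that a compact set admits a convex open neighborhood inside any given open neighborhood; this is false unless the compact set is already convex (a thin annular neighborhood of a circle contains no convex neighborhood of the circle), and $\log\abs{\overline{\Omega}^{\,*}}$ is convex only if $\Omega$ is already log-convex. Moreover, any log-convex Reinhardt $\Omega'\supset\overline{\Omega}$ must contain the log-convex hull of $\Omega$, which need not lie in $U$, so shrinking is the wrong direction. The correct move, as in the paper, is to \emph{enlarge}: replace $\Omega\cup P'_{z_1}\cup\cdots\cup P'_{z_N}$ by its log-convex hull and invoke the standard fact that the domain of convergence of a Laurent series (equivalently, the envelope of holomorphy of a Reinhardt domain off the axes) is log-convex, so $\B_\Omega\bar{f}$ automatically continues there.
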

\begin{proof}
We prove, in the next paragraph, that
\begin{equation}\label{eqn:ChooseP_z}
\forall\, z\in b\Omega, \exists \text{ a product of annuli } P_z\subset \Omega, z\in P_z'
\end{equation}
where $P_z'$ is the product of annuli corresponding to $P_z$ from Theorem~\ref{thm:MainLocalFull}.
With this in hand the proof is completed as follows. Since $\{P'_z: z\in b\Omega\}$ covers $b\Omega$, there exists $z_1,\ldots, z_N \in b\Omega$ such that $\{P'_{z_j}: 1\le j\le N\}$ covers $b\Omega$.
In fact, this sub-collection covers a full neighbourhood of $b\Omega$.
Now, let 
\[\Omega' = \Omega \cup P'_{z_1} \cup \cdots \cup P'_{z_N}.\]
Clearly $\overline{\Omega}\subset \Omega'$ and $\Omega'$ is a Reinhardt domain.
For $f\in A^2(\Omega)$, $\B_\Omega\bar{f}\in A^2(P'_{z_j})$, for $1\le j\le N$, by Theorem~\ref{thm:MainLocalFull}.
Hence, $\B_\Omega\bar{f}\in A^2(\Omega')$ and, in particular, it is holomorphic in $\Omega'$.
Furhtermore, we may replace $\Omega'$ by its pseudoconvex hull which is the smallest log-convex Reinhardt domain containing it.

\begin{figure}[h!]
\includegraphics{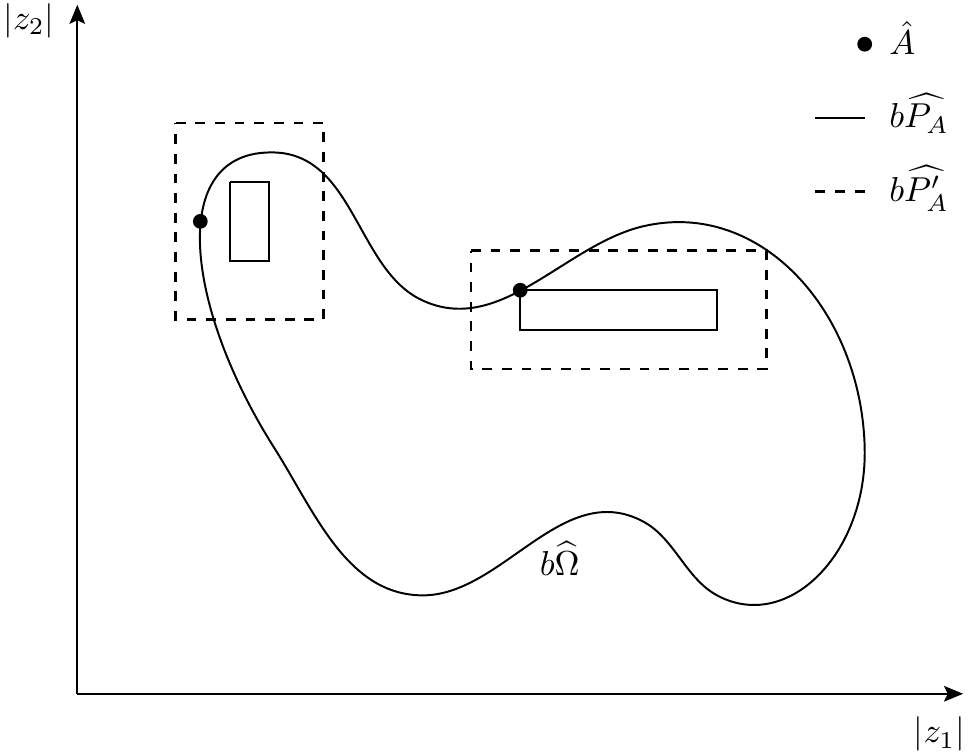}
\caption{Products of Annuli $P_A$ and $P'_A$.}
\end{figure}

We complete the proof by showing \eqref{eqn:ChooseP_z}.
Fix $A=(a_1,\ldots,a_n)\in b\Omega$.
Consider the modulus profile $\widehat{\Omega}=\{\big(\abs{z_1},\ldots,\abs{z_n}\big): (z_1,\ldots,z_n)\in\Omega\}\subset\R^n$.
Now, $\widehat{A}=\big(\abs{a_1},\ldots,\abs{a_n}\big)\in b\widehat{\Omega}$.
Then, there is at least one $1\le j\le n$, such that the $\abs{z_j}$ direction is transversal to $b\widehat{\Omega}$ at $\widehat{A}$.
Without loss of generality suppose that $j=1$ and the increasing $\abs{z_1}$ direction points inward (into $\widehat{\Omega}$) at $\widehat{A}$.
Now there is an $0<\epsilon<\abs{a_1}$ such that
\[P_A = \A\big(\abs{a_1}+\epsilon,\abs{a_1}+3\epsilon\big)\times\prod_{k=2}^n\A\big(\abs{a_k}-\epsilon,\abs{a_k}+\epsilon\big) \subset \Omega.\]
Notice that the inner radius of the first annulus in the product $P_A'$, from Theorem~\ref{thm:MainLocalFull}, is 
\[\frac{\left(\abs{a_1}+\epsilon\right)^2}{\abs{a_1}+3\epsilon} < \abs{a_1}\]
by the choice of $\epsilon$.
Hence, $A\in P_A'$ and we are done.
\end{proof}

%%%%%%%%%%%%%%%%%%%%%%%%%%%%%%%%%%%%%%%%%%%%%%%%%%%%%%%%%%%%%%%%%%%%%%%%%%%%

\section{Concluding Remarks}\label{sec:ConcRmks}

In light of the results presented here, it is natural to ask whether this holomorphic extension phenomenon holds on other classes of domains.

As pointed out in the introduction, we expect our results to have operator theoretic implications for the Friedrichs operator and in the study of quadrature domains.
For a similar study on domains that satisfy condition R see \cite{PranavKaushal}.

Recently, in \cite{ChaDupMou} it is shown that the smoothing property observed in \cite{HerMcN, HerMcNStr} also holds for weighted Bergman projections.
A direct adaptation of our arguments here would show that the same holomorphic extension also holds for weighted Bergman projections with multi-radial weights. 

A version of the smoothing property shown in \cite{HerMcN} also holds for the harmonic Bergman projection \cite{Her13}.
Analogously, one can investigate harmonic extension phenomena enjoyed by the harmonic Bergman projection on domains with certain symmetries.

%%%%%%%%%%%%%%%%%%%%%%%%%%%%%%%%%%%%%%%%%%%%%%%%%%%%%%%%%%%%%%%%%%%%%%%%%%%%

\section*{Acknowledgements}

We would like to thank the anonymous referee for constructive comments. We wish to thank Kaushal Verma for pointing out the connection between our work and the Friedrichs operator. Additionally, we thank Mehmet \c{C}elik and Jeffery D. McNeal for their helpful feedback during the course of this project.

\def\MR#1{\relax\ifhmode\unskip\spacefactor3000 \space\fi \href{http://www.ams.org/mathscinet-getitem?mr=#1}{MR#1}}

\begin{bibdiv}
\begin{biblist}

\bib{Bell}{article}{
   author={Bell, Steven R.},
   title={Biholomorphic mappings and the $\bar \partial $-problem},
   journal={Ann. of Math. (2)},
   volume={114},
   date={1981},
   number={1},
   pages={103--113},
   issn={0003-486X},
   review={\MR{625347}},
}

\bib{Boas84}{article}{
   author={Boas, Harold P.},
   title={Holomorphic reproducing kernels in Reinhardt domains},
   journal={Pacific J. Math.},
   volume={112},
   date={1984},
   number={2},
   pages={273--292},
   issn={0030-8730},
   review={\MR{743985}},
}

\bib{BoasStraubeSurvey}{article}{
   author={Boas, Harold P.},
   author={Straube, Emil J.},
   title={Global regularity of the $\overline\partial$-Neumann problem: a
   survey of the $L^2$-Sobolev theory},
   conference={
      title={Several complex variables},
      address={Berkeley, CA},
      date={1995--1996},
   },
   book={
      series={Math. Sci. Res. Inst. Publ.},
      volume={37},
      publisher={Cambridge Univ. Press, Cambridge},
   },
   date={1999},
   pages={79--111},
   review={\MR{1748601}},
}

\bib{Catlin80}{article}{
   author={Catlin, David},
   title={Boundary behavior of holomorphic functions on pseudoconvex
   domains},
   journal={J. Differential Geom.},
   volume={15},
   date={1980},
   number={4},
   pages={605--625 (1981)},
   issn={0022-040X},
   review={\MR{628348}},
}

\bib{ChaDupMou}{article}{
   author={Charpentier, P.},
   author={Dupain, Y.},
   author={Mounkaila, M.},
   title={On estimates for weighted Bergman projections},
   journal={Proc. Amer. Math. Soc.},
   volume={143},
   date={2015},
   number={12},
   pages={5337--5352},
   issn={0002-9939},
   review={\MR{3411150}},
}

\bib{EdholmMcNeal1}{article}{
   author={Edholm, L. D.},
   author={McNeal, J. D.},
   title={The Bergman projection on fat Hartogs triangles: $L^p$
   boundedness},
   journal={Proc. Amer. Math. Soc.},
   volume={144},
   date={2016},
   number={5},
   pages={2185--2196},
   issn={0002-9939},
   review={\MR{3460177}},
}

\bib{EdholmMcNeal2}{article}{
   author={Edholm, L. D.},
   author={McNeal, J. D.},
   title={Bergman subspaces and subkernels: Degenerate $L^p$
   mapping and zeroes},
   journal={preprint},
   date={2016},
   review={ \href{http://arxiv.org/abs/1605.06223}{arXiv:1605.06223}}
}

\bib{Friedrichs}{article}{
   author={Friedrichs, Kurt},
   title={On certain inequalities and characteristic value problems for
   analytic functions and for functions of two variables},
   journal={Trans. Amer. Math. Soc.},
   volume={41},
   date={1937},
   number={3},
   pages={321--364},
   issn={0002-9947},
   review={\MR{1501907}},
}

\bib{PranavKaushal}{article}{
   author={Haridas, Pranav},
   author={Verma, Kaushal},
   title={Quadrature domains in $\Bbb{C}^n$},
   journal={Comput. Methods Funct. Theory},
   volume={15},
   date={2015},
   number={1},
   pages={125--141},
   issn={1617-9447},
   review={\MR{3318310}},
}

\bib{Her13}{article}{
   author={Herbig, A.-K.},
   title={A note on a smoothing property of the harmonic Bergman projection},
   journal={Internat. J. Math.},
   volume={24},
   date={2013},
   number={4},
   pages={1350032, 14},
   issn={0129-167X},
   review={\MR{3062972}},
}

\bib{HerMcN}{article}{
   author={Herbig, A.-K.},
   author={McNeal, J. D.},
   title={A smoothing property of the Bergman projection},
   journal={Math. Ann.},
   volume={354},
   date={2012},
   number={2},
   pages={427--449},
   issn={0025-5831},
   review={\MR{2965249}},
}

\bib{HerMcNStr}{article}{
   author={Herbig, A.-K.},
   author={McNeal, J. D.},
   author={Straube, E. J.},
   title={Duality of holomorphic function spaces and smoothing properties of
   the Bergman projection},
   journal={Trans. Amer. Math. Soc.},
   volume={366},
   date={2014},
   number={2},
   pages={647--665},
   issn={0002-9947},
   review={\MR{3130312}},
}

\bib{KrantzBook}{book}{
   author={Krantz, Steven G.},
   title={Function theory of several complex variables},
   note={Reprint of the 1992 edition},
   publisher={AMS Chelsea Publishing, Providence, RI},
   date={2001},
   pages={xvi+564},
   isbn={0-8218-2724-3},
   review={\MR{1846625}},
}

\bib{KrantzEtAl}{article}{
   author={Krantz, Steven G.},
   author={Li, Song-Ying},
   author={Lin, Peng},
   author={Rochberg, Richard},
   title={The effect of boundary regularity on the singular numbers of
   Friedrichs operators on Bergman spaces},
   journal={Michigan Math. J.},
   volume={43},
   date={1996},
   number={2},
   pages={337--348},
   issn={0026-2285},
   review={\MR{1398158}},
}

\bib{McNSte94}{article}{
   author={McNeal, J. D.},
   author={Stein, E. M.},
   title={Mapping properties of the Bergman projection on convex domains of
   finite type},
   journal={Duke Math. J.},
   volume={73},
   date={1994},
   number={1},
   pages={177--199},
   issn={0012-7094},
   review={\MR{1257282}},
}

\bib{PhoSte}{article}{
   author={Phong, D. H.},
   author={Stein, E. M.},
   title={Estimates for the Bergman and Szeg\"o projections on strongly
   pseudo-convex domains},
   journal={Duke Math. J.},
   volume={44},
   date={1977},
   number={3},
   pages={695--704},
   issn={0012-7094},
   review={\MR{0450623}},
}

\bib{Straube86}{article}{
   author={Straube, Emil J.},
   title={Exact regularity of Bergman, Szeg\H o and Sobolev space
   projections in nonpseudoconvex domains},
   journal={Math. Z.},
   volume={192},
   date={1986},
   number={1},
   pages={117--128},
   issn={0025-5874},
   review={\MR{835396}},
}

\bib{ZeytuncuTran}{article}{
   author={Zeytuncu, Yunus E.},
   title={$L^p$ regularity of weighted Bergman projections},
   journal={Trans. Amer. Math. Soc.},
   volume={365},
   date={2013},
   number={6},
   pages={2959--2976},
   issn={0002-9947},
   review={\MR{3034455}},
}

\end{biblist}
\end{bibdiv}

\end{document}